\newtheorem{theorem}{Theorem}[section]
\newtheorem{cor}[theorem]{Corollary}
\newtheorem{conj}[theorem]{Conjecture}
\theoremstyle{definition}
\newtheorem{definition}[theorem]{Definition}
\newtheorem{example}[theorem]{Example}
\theoremstyle{remark}
\newtheorem{remark}[theorem]{Remark}
\numberwithin{equation}{section}
\newcommand{\llbr}{[\negthinspace[}
\newcommand{\rrbr}{]\negthinspace]}
\newcommand{\llpar}{(\negthinspace(}
\newcommand{\rrpar}{)\negthinspace)}
\newcommand{\LL}{\ensuremath{\mathbb{L}}}
\newcommand{\Z}{\ensuremath{\mathbb{Z}}}
\newcommand{\Q}{\ensuremath{\mathbb{Q}}}
\newcommand{\R}{\ensuremath{\mathbb{R}}}
\newcommand{\C}{\ensuremath{\mathbb{C}}}
\newcommand{\A}{\ensuremath{\mathbb{A}}}
\newcommand{\cX}{\ensuremath{\mathscr{X}}}
\newcommand{\cC}{\ensuremath{\mathscr{C}}}
\newcommand{\cY}{\ensuremath{\mathscr{Y}}}
\renewcommand{\R}{\ensuremath{\mathbb{R}}}
\renewcommand{\C}{\ensuremath{\mathbb{C}}}
\renewcommand{\A}{\ensuremath{\mathbb{A}}}
\renewcommand{\cY}{\ensuremath{\mathscr{Y}}}
\newcommand{\Spec}{\ensuremath{\mathrm{Spec}\,}}
\newcommand{\Var}{\mathrm{Var}}
\newcommand{\an}{\mathrm{an}}
\newcommand{\spe}{\mathrm{sp}}
\newcommand{\trop}{\mathrm{trop}}
\newcommand{\VF}{\mathrm{VF}}
\newcommand{\Vol}{\mathrm{Vol}}
\newcommand{\Hilb}{\mathrm{Hilb}}
\newcommand{\mot}{\mathrm{mot}}
\newcommand{\eu}{\mathrm{eu}}
\begin{document}
\title{Geometric invariants for non-archimedean semialgebraic sets}

\author[Johannes Nicaise]{Johannes Nicaise}
\address{ Imperial College,
Department of Mathematics, South Kensington Campus,
London SW7 2AZ, UK, and KU Leuven, Department of Mathematics, Celestijnenlaan 200B, 3001 Heverlee, Belgium}
\email{j.nicaise@imperial.ac.uk}

\thanks{The author is supported by the ERC Starting Grant MOTZETA (project 306610) of the European Research Council, and by long term structural funding (Methusalem
grant) of the Flemish Government.}

\begin{abstract}
This survey paper explains how one can attach geometric invariants to semialgebraic sets defined over
 non-archimedean fields, using the theory of motivic integration of Hrushovski and Kazhdan. It also discusses
 tropical methods to compute these invariants in concrete cases, as well as an application
  to refined curve counting, developed in collaboration with Sam Payne and Franziska Schroeter.
 \end{abstract}

\maketitle

\section{Introduction}
 Let $K$ be the field of complex Puiseux series: $$K=\bigcup_
 {n>0}\mathbb{C}\llpar t^{1/n}\rrpar.$$ This is an algebraic closure of the field of complex Laurent series $\mathbb{C}\llpar t\rrpar$.
 We denote by $v:K\to \Q\cup\{\infty\}$ the $t$-adic valuation.
  A {\em semialgebraic} subset of an algebraic $K$-variety $X$ is a subset of $X(K)$ that can locally be defined by finitely many Boolean operators and inequalities of the form $v(f)\leq v(g)$ where $f,g$ are algebraic functions on $X$.
    The aim of this survey paper is to explain how one can attach geometric invariants to semialgebraic sets over the field $K$ using the theory of motivic integration developed by Hrushovski and Kazhdan \cite{HK}. The motivation for this construction is twofold:
 \begin{enumerate}
 \item Semialgebraic sets occur naturally in tropical and non-archimedean geometry. For instance, given a family of subvarieties of an algebraic torus, the locus of fibers of the family with fixed tropicalization is semialgebraic (see Example \ref{exam:semialg}\eqref{it:examtrop}).

 \item Even if one is ultimately interested in computing invariants for algebraic varieties, it is often useful to know that one can
  compute these invariants on semialgebraic decompositions of the variety, for instance to obtain tropical formulas.
 \end{enumerate}
 Both motivations play an essential role in an ongoing project with Sam Payne and Franziska Schroeter \cite{NPS}, which aims to give a geometric interpretation of the refined tropical multiplicities of Block and G{\"ottsche} \cite{BG} and to obtain a tropical correspondence theorem for the refined curve counting invariants of G{\"o}ttsche and Shende \cite{GS}. We will explain the main ideas in Section \ref{sec:appli}.

 The central tool in our approach is the {\em motivic volume} defined by Hrushovski and Kazhdan. This is a morphism
$$\mathrm{Vol}:K_0(\mathrm{VF}_K)\to K_0(\mathrm{Var}_\C)$$ from the Grothendieck ring of semialgebraic sets over the valued field $K$ to the Grothendieck ring of algebraic varieties over the residue field $\C$. With the help of this morphism, one can extend all the classical motivic invariants in algebraic geometry to semialgebraic sets, by composing $\mathrm{Vol}$ with the motivic invariant on $K_0(\mathrm{Var}_\C)$. In particular, this allows us to define the Hodge-Deligne polynomial, the $\chi_{-y}$-genus and the Euler characteristic of a semialgebraic set.
 \if false
 A common feature of all the theories of motivic integration is that they try to understand the structure of semialgebraic objects over $K$ in terms of
data living over the residue field $\C$ (that is, complex algebraic varieties) and over the value group $\Q=|K^{\ast}|$ (polyhedra). This aim is realized in the theory of Hrushovski and Kazhdan by a complete description of the Grothendieck ring of semialgebraic sets $K_0(\mathrm{VF}_K)$ as a tensor product of certain Grothendieck rings of $k$-varieties and polyhedra, respectively. Hrushovski and Kazhdan show that $K_0(\mathrm{VF}_K)$ is generated by the classes of the following types of semialgebraic sets:
\begin{itemize}
\item inverse images of closed $\Q$-rational polyhedra $\Gamma$ in $\mathbb{R}^n$ under the tropicalization map $\mathrm{trop}:(K^{\ast})^n\to \mathbb{R}^n$;
\item tubes around subvarieties $X$ of the special fibers of smooth $R$-schemes $\mathcal{X}$ of finite type.
\end{itemize}
 Moreover, they express in a simple and elegant way all the relations that exist between these classes. The motivic volume
 $$\mathrm{Vol}:K_0(\mathrm{VF}_K)\to K_0(\mathrm{Var}_\C)$$ is fully characterized by its values on the generators above:
 \begin{itemize}
 \item $\mathrm{Vol}(\mathrm{trop}^{-1}(\Gamma))=[\mathbb{G}^n_{m,\C}]$ for every $n\geq 1$ and every closed $\Q$-rational polyhedron $\Gamma$ in $\mathbb{R}^n$;
 \item the volume of the tube around $X$ in $\mathcal{X}$ equals $[X]$.
 \end{itemize}
 The motivation for the first expression is that we can think of $\mathrm{trop}^{-1}(\Gamma)$ as a $(R^{\ast})^n$-torsor over $\Gamma$, and that the volume of $(R^{\ast})^n$ according to the second expression is $[\mathbb{G}^n_{m,\C}]$.
 \fi
  In many situations, these invariants of semialgebraic sets have a natural geometric meaning. For instance, one can deduce from work by Martin \cite{M} and Hrushovski-Loeser \cite{HL} that the Euler characteristic of a semialgebraic set coincides with the one obtained from Berkovich's theory of {\'e}tale cohomology for $K$-analytic spaces. Moreover, when $X$ is an algebraic variety over $K$, the Hodge-Deligne polynomial of $X(K)$ (viewed as a semialgebraic set) equals the Hodge-Deligne polynomial of the limit mixed Hodge structure associated with $X$.

 In order to compute these motivic invariants in concrete examples, we have established a tropical expression for the class in $K_0(\mathrm{VF}_K)$ of
   a sch{\"o}n subvariety $X$ of an algebraic $K$-torus $\mathbb{G}^n_{m,K}$. The sch{\"o}nness assumption is a generic non-degeneracy condition that is often used in tropical geometry; see Section \ref{ss:tropical} for a precise definition. The tropicalization  of $X$ is the closure in $\R^n$ of the image of $X(K)$ under the {\em tropicalization map}  $$\trop:(K^{\ast})^n\to \Q^n:(x_1,\ldots,x_n)\mapsto (v(x_1),\ldots,v(x_n)).$$
      Every rational polyhedral decomposition $\Sigma$ of the tropicalization of $X$ gives rise to a semialgebraic decomposition of $X(K)$ whose pieces are the inverse images of the open cells of $\Sigma$ under the tropicalization map. This leads to an explicit expression for the class of $X(K)$ in $K_0(\mathrm{VF}_K)$ involving the cells of $\Sigma$ and the corresponding initial degenerations of $X$ (see Theorem \ref{thm:trop}).

 Although, for our purposes, the case where $K$ is the field of Puiseux series is sufficient, we will work in greater generality, since this does not require any additional efforts. Let $K$ be any algebraically closed real-valued field of equal characteristic zero. We denote by $R$, $k$ and $G\subset (\R,+)$ the valuation ring, residue field and value group of $K$, respectively, and by $v:K^{\ast}\to G$ the valuation map. If $K=\cup_{n>0} \C\llpar t^{1/n}\rrpar$ then $R=\cup_{n>0}\C\llbr t^{1/n}\rrbr$, $k=\C$ and $G=\Q$. In any case, our assumptions imply that $G$ is divisible and that $k$ is algebraically closed.
    We extend the valuation $v$ to $K$ by setting $v(0)=\infty$, and we extend the usual ordering on $\R$ to $\overline{\R}=\R\cup \{\infty\}$ by declaring that $a\leq \infty$ for all $a$ in $\overline{\R}$. For every positive integer $n$, we denote by $\trop$ the tropicalization map
 $$\trop:(K^{\ast})^n\to G^n\subset \R^n:(x_1,\ldots,x_n)\mapsto (v(x_1),\ldots,v(x_n)).$$

\section{The motivic volume of Hrushovski-Kazhdan}
 In this section, we will explain how the theory of motivic integration of Hrushovski-Kazhdan \cite{HK} allows us to associate a motivic volume to
 any semialgebraic set over $K$. The proofs in \cite{HK}  rely heavily on the model theory of algebraically closed valued fields. We have tried to present the main results in a more geometric fashion to make
 the theory accessible to algebraic geometers.

\subsection{Semialgebraic sets}
Let $X$ be a $K$-scheme of finite type. A subset $S$ of $X(K)$ is called a semialgebraic subset of $X$ if we can
 write it as a finite Boolean combination of sets of the form
 $$\{x\in U(K)\,|\,v(f(x))\geq v(g(x))\}$$ where $U$ is an open subscheme of $X$, and $f$ and $g$ are regular functions on $U$.
  The Cartesian product of two semialgebraic sets is again semialgebraic. If $f:X\to Y$ is a morphism of $K$-schemes of finite type and $S_Y$ is a semialgebraic subset of $Y$, then it is easy to see that $f^{-1}(S_Y)\cap X(K)$ is semialgebraic in $X$. Conversely, Robinson's  quantifier elimination theorem for algebraically closed valued fields \cite{rob} implies that, if  $S_X$ is a semialgebraic subset of $X$, then $f(S_X)\subset Y(K)$ is a semialgebraic subset of $Y$.

 \begin{example}\label{exam:semialg}\item
\begin{enumerate}
\item If $X$ is a $K$-scheme of finite type, then every constructible subset of $X(K)$ is semialgebraic. Indeed, locally on $X$, it is a finite Boolean combination of subsets of the form
    $$\{x\in X(K)\,|\,f(x)=0\}= \{x\in X(K)\,|\,v(f(x))\geq v(0)\},$$ with $f$ a regular function.

\item \label{it:spsemialg} Let $\cX$ be an $R$-scheme of finite type. The {\em specialization map}
    $$\spe_{\cX}:\cX(R)\to \cX(k)$$ is defined by reducing coordinates modulo the maximal ideal of $R$.
If $C$ is a constructible subset of $\cX(k)$, then the {\em tube} $\spe_{\cX}^{-1}(C)$ around $C$ in $\cX$ is a semialgebraic subset of $\cX_K$. To prove this, it suffices to consider the case where $\cX$ is affine and $C$ is closed in $\cX(k)$. If $t_1,\ldots,t_n$ generate the $R$-algebra $\mathcal{O}(\cX)$ and $C$ is the set of closed points of the zero locus of an ideal $(f_1,\ldots,f_\ell)$ in $\mathcal{O}(\cX)$, then
 $$\spe_{\cX}^{-1}(C)=\{x\in X(K)\,|\,v(t_i(x))\geq 0 \mbox{ and }v(f_j(x))>0\mbox{ for all }i,j\}.$$
  When $C$ is a constructible subset of $\cX_k$, rather than $\cX(k)$, we write $\spe_{\cX}^{-1}(C)$ for $\spe_{\cX}^{-1}(C\cap \cX(k))$.

\item  A $G$-rational polyhedron in $\R^n$ is an intersection of finitely many half-spaces of the form $$\{x\in G^n\,|\,a_1x_1+\ldots +a_nx_n\leq c\}$$ with $a_1,\ldots,a_n$ in $\Z$ and $c$ in $G$. It is clear from the definitions that,
if $\Gamma$ is a finite Boolean combination of $G$-rational polyhedra in $\R^n$, then $\trop^{-1}(\Gamma)$ is a semialgebraic subset of $K^n$.

\item \label{it:examtrop} A more sophisticated example is the following.
 Let $Y$ be a $K$-scheme of finite type and let $X$ be a subscheme of $Y\times_K \mathbb{G}_{m,K}^n$, for some $n>0$. We denote by $f:X(K)\to Y(K)$ the restriction of the projection morphism $Y\times_K \mathbb{G}_{m,K}^n\to Y$. Let $\Gamma$ be a finite Boolean combination of $G$-rational polyhedra in $\R^n$. Then the set of points $y$ in $Y(K)$ such that $\trop(f^{-1}(y))=\Gamma$ is a semialgebraic subset of $Y$, by Robinson's quantifier elimination theorem.
\end{enumerate}
  \end{example}

The above examples should make it clear that semialgebraic sets naturally arise in non-archimedean and tropical geometry. We will be particularly interested in  a special case of example \eqref{it:examtrop}: let $Y$ be a $K$-scheme of finite type and let $X(\Delta)$ be the toric surface over $K$ associated with a lattice polygon $\Delta$ in $\R^2$. Let $\mathscr{C}\to Y$ be a family of curves in $X(\Delta)$; thus $\cC$ is a closed subscheme of $X(\Delta)\times_K Y$ such that the fiber $\cC_y$ over each point $y$ of $Y(K)$ is a curve in $X(\Delta)$.
 Then the locus of points $y$ in $Y(K)$ such that $\mathscr{C}_y\cap (K^{\ast})^2$ tropicalizes to a fixed tropical curve $\Gamma$ is a semialgebraic subset of $Y$.

If $X,Y$ are $K$-schemes of finite type and $S_X\subset X$ and $S_Y\subset Y$ are semialgebraic sets, then a map $S_X\to S_Y$ is called semialgebraic if its graph is a semialgebraic subset of $X\times_K Y$. It is easy to check that the composition of two semialgebraic maps is again semialgebraic. Thus we can define a category $\VF_K$ whose objects are
pairs $(X,S)$ with $X$ a $K$-scheme of finite type and $S$ a semialgebraic set in $X$, and whose morphisms are semialgebraic maps. We will usually denote an object of $\VF_K$ simply by $S$, leaving the ambient variety $X$ implicit.

\subsection{Grothendieck rings}
The theory of Hrushovski and Kazhdan relies on a precise description of the Grothendieck ring $K_0(\VF_K)$ of semialgebraic sets, which we will now define.
 As an abelian group, $K_0(\VF_K)$ is defined by the following presentation.
 \begin{itemize}
 \item Generators: isomorphism classes $[S]$ of semialgebraic sets $S$ over $K$.
 \item Relations: if $X$ is a $K$-scheme of finite type and $T\subset S$ are semialgebraic subsets of $X$, then $[S]=[T]+[S\setminus T]$. These relations are often called {\em scissor relations}, because they allow to cut up a semialgebraic set into semialgebraic pieces.
 \end{itemize}
 We then obtain a ring structure on $K_0(\VF_K)$ by setting $[S]\cdot [S']=[S\times S']$ for all semialgebraic sets $S,S'$.

  The motivic volume will take its values in a different Grothendieck ring, namely, the Grothendieck ring $K_0(\Var_k)$ of varieties over the residue field $k$. It is defined similarly:
  as an abelian group, it is generated by the isomorphism classes $[X]$ of $k$-schemes of finite type $X$, subject to the relation $[X]=[Y]+[X\setminus Y]$ for every closed subscheme $Y$ of $X$. The ring structure is induced by the fiber product over $k$. It is customary to write $\LL$ for the class $[\A^1_k]$ of the affine line in $K_0(\Var_k)$.

\subsection{Definition of the motivic volume}
 A common aim of all the theories of motivic integration is to understand the structure of semialgebraic objects over $K$ in terms of
data living over the residue field $k$ (algebraic $k$-varieties) and over the value group $G$ (polyhedra). In the geometric approaches to motivic integration by Kontsevich, Denef--Loeser, Sebag and Loeser--Sebag, this is achieved by
analyzing the geometry of arc schemes or Greenberg schemes \cite{motint}. There are also approaches based on model theory: Cluckers and Loeser use cell decomposition to describe the shapes of semialgebraic sets and to define their motivic measure \cite{CL}. For us, the most convenient theory will be the one developed by Hrusohvski and Kazhdan in \cite{HK}, which is also based on the model theory of valued fields.
 It provides a complete description of the Grothendieck ring of semialgebraic sets $K_0(\mathrm{VF}_K)$ as a tensor product of certain graded Grothendieck rings of polyhedra and  $k$-varieties, respectively. This description is close in spirit to tropical geometry, where one decomposes subspaces of algebraic tori into polyhedra ({\em via} the tropicalization map) and so-called {\em initial degenerations} over the residue field. This analogy will be quite apparent in our tropical formula for the motivic volume in Theorem \ref{thm:trop}.

 There are two natural ways to produce semialgebraic sets over $K$ from objects over the residue field $k$ and the value group $G$. We start with the most elementary construction.
 Let $n$ be a positive integer and let $\Gamma$ be a finite Boolean combination of $G$-rational polyhedra in $\R^n$. We have seen in Example \ref{exam:semialg} that $\trop^{-1}(\Gamma)$ is a semialgebraic subset of $K^n$. Thus, we can consider its class
 $$\Theta(\Gamma,n):= [\trop^{-1}(\Gamma)]$$ in $K_0(\VF_K)$. It is elementary to see that this definition is invariant under affine transformations of $\R^n$ of the form
 $x\mapsto Ax+b$ with $A\in \mathrm{GL}_n(\Z)$ and $b\in G^n$, and that it is additive with respect to scissor operations on the polyhedron $\Gamma$.

 The second construction starts from a nonnegative integer $n$ and a $k$-scheme of finite type $X$ of dimension at most $n$.
  First, assume that $X$ is smooth over $k$.
 Then we can find a smooth $R$-scheme $\cX$ of relative dimension $n$ and  an immersion of $R$-schemes $X \to \cX$. The set $\spe_{\cX}^{-1}(X)$ is semialgebraic in $\cX_K$ by Example \ref{exam:semialg}, and we claim that the class
  $$\Theta(X,n):=[\spe_{\cX}^{-1}(X)]$$ in $K_0(\VF_K)$ does not depend on the choice of $\cX$. To see this, let $X\to \cY$ be an immersion into another smooth $R$-scheme of relative dimension $n$. Working locally on $X$ and using the scissor relations in $K_0(\VF_K)$, we may assume that there exists an \'etale morphism from
  $X$ onto a subscheme of $\A^n_k$ that extends to \'etale morphisms of $R$-schemes $\cX\to \A^n_R$ and $\cY\to \A^n_R$ (see \cite[18.1.1]{ega4.4}).
    If we view $X$ as a subscheme of $\cX_k\times_{\A^n_k}\cY_k$ via the diagonal embedding,
   then the fact that $R$ is henselian implies that the semialgebraic set $$\spe_{\cX\times_{\A^n_R}\cY}^{-1}(X)$$ is the graph of a bijection between $\spe_{\cX}^{-1}(X)$ and $\spe_{\cY}^{-1}(X)$. Hence, the semialgebraic sets $\spe_{\cX}^{-1}(X)$ and $\spe_{\cY}^{-1}(X)$ define the same class in $K_0(\VF_K)$.
       If $X$ is any $k$-scheme of finite type of dimension at most $n$, then
  we can write $X$ as a disjoint union of $k$-smooth subschemes $X_1,\ldots,X_r$. One checks easily that
   the element
  $$\Theta(X,n):=\sum_{i=1}^r \Theta(X_i,n)$$ in $K_0(\VF_K)$ does not depend on the choice of such a partition.

  These two constructions are not completely orthogonal, as is illustrated by the following examples.
 \begin{example}\label{exam:rel}\item
 \begin{enumerate}
 \item We consider the $0$-simplex $\Delta_0=\{0\}$ in $\R$. Then $\trop^{-1}(\Delta_0)$ is the semialgebraic subset $R^{\ast}$ of $K$.
 On the other hand, we can also write $R^{\ast}$ as $$\mathbb{G}_{m,R}(R)=\spe^{-1}_{\mathbb{G}_{m,R}}(\mathbb{G}_{m,k}).$$
  It follows that $[R^{\ast}]=\Theta(\Delta_0,1)=\Theta(\mathbb{G}_{m,k},1)$.

 \item Let $D$ be the open unit disk in $K$, that is, the set of all $x$ in $K$ such that $v(x)> 0$.
  Then we can write $D$ as the union of the point $\{0\}$ and the punctured open unit disk $$D\setminus \{0\}=\trop^{-1}(\R_{>0}),$$ which yields the expression
  $$[D]=\Theta(\Spec k,0)+\Theta(\R_{>0},1).$$ On the other hand, we can also view $D$ as $\spe^{-1}_{\A^1_R}(O)$, where $O$ denotes the origin of $\A^1_k$.
  Hence, we have $$\Theta(\Spec k,0)+\Theta(\R_{>0},1)=\Theta(\Spec k,1).$$
  \end{enumerate}
  \end{example}

In \cite{HK}, Hurshovski and Kazhdan have proven the following striking result.
 \begin{theorem}[Hrushovski-Kazhdan] \label{thm:HK}
  The $\Theta$-classes of $G$-rational polyhedra and $k$-schemes of finite type generate the Grothendieck ring of semialgebraic sets $K_0(\VF_K)$.
   Moreover, apart from the scissor relations for $G$-rational polyhedra and $k$-schemes of finite type, the relations described in Example \ref{exam:rel} are the only relations between the $\Theta$-classes in $K_0(\VF_K)$.
  \end{theorem}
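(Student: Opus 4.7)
The plan is to follow the strategy of Hrushovski and Kazhdan, which proceeds via an intermediate sort that packages residue-field and value-group data simultaneously. Let $RV := K^{\ast}/(1+\mathfrak{m})$, where $\mathfrak{m}$ is the maximal ideal of $R$; this group fits in a short exact sequence
$$1 \to k^{\ast} \to RV \to G \to 0,$$
and the natural surjection $\rho: K^{\ast}\to RV$ is the common refinement of the reduction map and the valuation. The goal is to factor the presentation of $K_0(\VF_K)$ through a Grothendieck ring of definable subsets of finite powers of $RV$, which admits a concrete description in terms of $k$-varieties and $G$-rational polyhedra.

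For the generation statement, I would first show that every semialgebraic subset $S\subseteq K^n$ is, modulo scissor cutting and semialgebraic bijection, a finite disjoint union of pullbacks $\rho^{-1}(T)$ of definable subsets $T\subseteq RV^m$. Robinson's quantifier elimination \cite{rob} provides a quantifier-free description using only expressions of the form $v(f)\leq v(g)$; the technical content is to upgrade this to an actual definable bijection with an $RV$-pullback. This proceeds by induction on $n$, using a fibrewise analysis and a Jacobian-type property for definable functions to control the $\VF$-direction. Once $S$ is presented as $\rho^{-1}(T)$, the image of $T$ under the natural map to $G^m$ is a Boolean combination of $G$-rational polyhedra and the corresponding fibers are constructible subsets of $(k^{\ast})^m$; splitting $T$ accordingly and lifting yields the required expression of $[S]$ as a sum of $\Theta$-classes.

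For the second part, I would construct an abstract ring $A$ by generators $\Theta(\Gamma,n)$ and $\Theta(X,n)$ modulo the scissor relations and the identities of Example \ref{exam:rel}, and produce the obvious surjective ring homomorphism $A\to K_0(\VF_K)$. To build the inverse, one assigns to each semialgebraic set the normal form constructed in the previous step; well-definedness reduces to showing that any two such normal forms differ by the listed relations. The first relation encodes the identification $R^{\ast}=\trop^{-1}(0)=\spe^{-1}_{\mathbb{G}_{m,R}}(\mathbb{G}_{m,k})$, reconciling the polyhedral and residue-field presentations of the torus of units; the second encodes how an open disk may be cut either as $\spe^{-1}_{\A^1_R}(O)$ or as the union of $\{0\}$ and a punctured disk. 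One then verifies that these two identities suffice by analysing the ambiguity at the single step where one chooses how to separate a point from a surrounding annulus on the $RV$ side.

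The main obstacle is the first step: promoting Robinson's quantifier-free description to an actual semialgebraic bijection with a pullback from $RV$. This is the technical core of \cite{HK} and relies on genuinely model-theoretic inputs, notably the stable embeddedness of the $RV$-sort in algebraically closed valued fields and fine control over definable functions via a Jacobian property, which are not easily replaced by purely algebro-geometric arguments. Once the bijection is available, computing $K_0$ of the intermediate object combinatorially and identifying the kernel of the map to $K_0(\VF_K)$ with the relations of Example \ref{exam:rel} is largely formal.
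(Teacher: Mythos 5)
The paper does not prove Theorem~\ref{thm:HK}: it is quoted verbatim from Hrushovski--Kazhdan \cite{HK} as an external result (``In \cite{HK}, Hurshovski and Kazhdan have proven the following striking result''), so there is no in-paper argument to compare your attempt against. What you have written is a plausible high-level roadmap of the Hrushovski--Kazhdan strategy, and you are commendably candid that you are not actually supplying the deep steps (the $RV$-reduction via quantifier elimination, stable embeddedness, and the Jacobian property). As a sketch of where the theorem comes from, the structure is correct: pass through $RV=K^{\ast}/(1+\mathfrak m)$, which mediates between $k^{\ast}$ and $G$, and then try to present $K_0(\VF_K)$ as a quotient of a Grothendieck ring on the $RV$ side.

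There is, however, a genuine misstatement in your assessment of the second half. You claim that once the $RV$-pullback normal form is available, ``computing $K_0$ of the intermediate object combinatorially and identifying the kernel of the map to $K_0(\VF_K)$ with the relations of Example~\ref{exam:rel} is largely formal.'' This is not accurate. Establishing \emph{surjectivity} (that the $\Theta$-classes generate) is the comparatively routine half; the genuine core of Hrushovski--Kazhdan's theorem is \emph{injectivity}, i.e.\ proving that the two identities of Example~\ref{exam:rel} (together with scissor relations) exhaust the kernel. This is precisely the ``lifting'' problem --- determining when two $RV$-definable sets become $\VF$-isomorphic after pullback --- and it occupies a large part of \cite{HK}; it is not a bookkeeping step that falls out of ``analysing the ambiguity at a single step.'' Relatedly, your outline drops the grading: the formal dimension index $n$ in $\Theta(-,n)$ is not decorative, and the precise HK statement (as the paper itself notes) is an isomorphism with a tensor product of \emph{graded} Grothendieck rings, with the two relations of Example~\ref{exam:rel} encoding how the gradings interact. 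Without tracking this, your proposed inverse map $K_0(\VF_K)\to A$ is not well-defined even at the level of bookkeeping. So: reasonable roadmap, honest about the model-theoretic black boxes, but the claim that the relations step is formal is wrong and hides the hardest part of the theorem.
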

Hrushovski and Kazhdan have formulated this result in a more precise way as an isomorphism between the ring $K_0(\VF_K)$ and a tensor product of certain graded Grothendieck rings of $k$-varieties and $G$-rational polyhedra.
 We are mostly interested in the following consequence of Theorem \ref{thm:HK}.

\begin{cor}\label{cor:HK}
There exists a unique ring morphism
$$\Vol:K_0(\VF_K)\to K_0(\Var_k)$$ with the following properties.
\begin{enumerate}
\item \label{it:defvol1} For every smooth $R$-scheme of finite type $\cX$ and every subscheme $X$ of $\cX_k$, we have $\Vol(\spe^{-1}_{\cX}(X))=[X]$.

\item \label{it:defvol2} If $\Gamma$ is a $G$-rational polyhedron in $\R^n$, then $\Vol(\trop^{-1}(\Gamma))=(\LL-1)^n$.
\end{enumerate}
\end{cor}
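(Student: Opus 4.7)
The plan is to deduce the corollary from Theorem~\ref{thm:HK}. That result presents $K_0(\VF_K)$ by the generators $\Theta(X,n)$ and $\Theta(\Gamma,n)$ subject to the scissor relations on each side together with the comparison relations illustrated in Example~\ref{exam:rel}. Accordingly, I would define $\Vol$ by prescribing
\[
\Vol(\Theta(X,n)) := [X] \quad \text{and} \quad \Vol(\Theta(\Gamma,n)) := (\LL-1)^n,
\]
for every $k$-scheme of finite type $X$ with $\dim X \leq n$ and every nonempty closed $G$-rational polyhedron $\Gamma \subset \R^n$, and extend additively to Boolean combinations via the scissor relations. Uniqueness is then automatic: properties (1) and (2) dictate the values of any ring morphism on the $\Theta$-classes, which generate the whole ring.

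For existence, the scissor relations on the $k$-side are inherited tautologically from those in $K_0(\Var_k)$, and independence of $n$ is clear since $[X]$ is intrinsic. Multiplicativity is checked on generators: one has $\Theta(X,n)\cdot\Theta(Y,m) = \Theta(X\times_k Y,\, n+m)$, mapping to $[X\times_k Y] = [X][Y]$, and likewise $\Theta(\Gamma_1,n_1)\cdot\Theta(\Gamma_2,n_2) = \Theta(\Gamma_1\times\Gamma_2,\, n_1+n_2)$, mapping to $(\LL-1)^{n_1+n_2}$; the mixed products are handled via the tensor-product description of $K_0(\VF_K)$ supplied by Theorem~\ref{thm:HK}, which reduces multiplicativity of $\Vol$ to that of the two separate restrictions. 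The comparison relations of Example~\ref{exam:rel} are then a direct calculation: $\Vol(\Theta(\Delta_0,1)) = \LL - 1 = [\mathbb{G}_{m,k}] = \Vol(\Theta(\mathbb{G}_{m,k},1))$, while the decomposition $\R_{\geq 0} = \{0\}\sqcup \R_{>0}$ forces $\Vol(\Theta(\R_{>0},1)) = 0$, yielding $\Vol(\Theta(\Spec k,0) + \Theta(\R_{>0},1)) = 1 = \Vol(\Theta(\Spec k,1))$.

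The main obstacle is verifying the scissor relations on the polyhedral side. Since $(\LL-1)^n$ depends only on the ambient dimension $n$ and not on $\Gamma$ itself, the induced assignment on Boolean combinations is forced to take the form $\chi(\Gamma)\cdot (\LL-1)^n$, where $\chi$ is a scalar-valued additive invariant with $\chi(\Gamma)=1$ on every nonempty closed convex polyhedron. This is precisely the polyhedral Euler characteristic, whose existence and additivity on the Boolean algebra of $G$-rational polyhedra in $\R^n$ is a classical combinatorial fact; once it is in hand, all polyhedral scissor relations are discharged at once, completing the construction of $\Vol$.
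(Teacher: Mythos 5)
Your proof is correct and follows essentially the same route as the paper: reduce to Theorem~\ref{thm:HK}, verify the two comparison relations from Example~\ref{exam:rel}, and discharge the polyhedral scissor relations by appealing to the polyhedral Euler characteristic (the unique additive invariant sending every nonempty $G$-rational polyhedron to $1$, so that $\chi'(\R_{>0})=0$). The only differences are cosmetic: the paper exhibits that invariant concretely as $\chi'(\Gamma)=\lim_{r\to+\infty}\chi_c(\Gamma\cap[-r,r]^n)$ rather than citing it as a classical fact, and it leaves multiplicativity implicit where you spell it out.
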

 \begin{proof}
 First, we show that the expression for $\Vol(\trop^{-1}(\Gamma))$ is compatible with the scissor relations for $G$-rational polyhedra.
 In fact, there exists a unique additive invariant $\chi'$ on the Boolean algebra generated by $G$-rational polyhedra in $\R^n$ that sends every $G$-rational polyhedron to $1$. This invariant can be expressed as $$\chi'(\Gamma)=\lim_{r\to +\infty}\chi_c(\Gamma\cap [-r,r]^n)$$
   for every finite Boolean combination $\Gamma$ of $G$-rational polyhedra, where $\chi_c$ denotes the singular Euler characteristic with compact supports (one can show that the limit stabilizes for sufficiently large $r$). Thus we can extend $\Vol$ to all $\Gamma$ in an additive way by setting $\Vol(\trop^{-1}(\Gamma))=\chi'(\Gamma)(\LL-1)^n$.
  By Theorem \ref{thm:HK}, it now only remains to observe that the expressions in \eqref{it:defvol1} and \eqref{it:defvol2} satisfy the relations described in Example \ref{exam:rel}, because $\chi'(\R_{>0})=0$.
 \end{proof}
\begin{remark}
A good way to think about the identity $\Vol(\trop^{-1}(\Gamma))=(\LL-1)^n$ for $G$-rational polyhedra $\Gamma$ in $\R^n$ is
 to view $\mathrm{trop}^{-1}(\Gamma)$ as a $(R^{\ast})^n$-torsor over $\Gamma$, and to observe that
 the volume of $$(R^{\ast})^n=\spe_{\mathbb{G}^n_{m,R}}^{-1}(\mathbb{G}_{m,k}^n)$$ is $[\mathbb{G}^n_{m,k}]=(\LL-1)^n$.
\end{remark}

 To give an idea about the information contained in the motivic volume, let us explain how its realizations compare to more classical invariants.
  Let $X$ be an algebraic $K$-variety and let $S$ be a semialgebraic subset of $X$. If we denote by $X^{\an}$ the Berkovich analytification of $X$
  over the completion of $K$, then we can associate to $S$ a subset $S^{\an}$ of $X^{\an}$ in a canonical way, defined by the same formulas as $S$.
  If $S^{\an}$ is locally closed in $X^{\an}$, then the germ $(X^{\an},S^{\an})$ has finite $\ell$-adic cohomology \cite{M}, and we deduced from results of Hrushovski and Loeser
 \cite{HL} that the $\ell$-adic  Euler characteristic of $(X^{\an},S^{\an})$ is equal to the image of $\Vol(S)$ under the Euler characteristic realization $K_0(\Var_k)\to \Z$.
  Moreover, if $K$ is the field of complex Puiseux series and $X$ is an algebraic variety over $K$, then the Hodge-Deligne polynomial of $\Vol(X(K))$ equals the Hodge-Deligne polynomial of the limit mixed Hodge structure associated with $X$.  See \cite{NPS} for details.

\subsection{Semistable models}\label{ss:ss}
 Let us look at a class of examples where the motivic volume can easily be computed.
We say that a flat $R$-scheme of finite type $\cX$ is {\em strictly semistable} if it can be covered with open subschemes that admit an \'etale morphism to an $R$-scheme of the form
 $$\Spec R[x_0,\ldots,x_d]/(x_0\cdot \ldots \cdot x_r-a)$$
where $r\leq d$ and $a$ is a non-zero element of the maximal ideal of $R$.

 Let $\cX$ be a strictly semistable $R$-scheme of pure relative dimension $d$, and let $E_i,i\in I$ be the irreducible components of $\cX_k$. For every non-empty subset $J$ of $I$, we set $$E_J=\bigcap_{j\in J}E_j, \quad E_J^o=E_J\setminus \left(\bigcup_{i\notin J}E_i\right).$$ The subsets $E_J^o$ form a partition of $\cX_k$ into locally closed subsets.
 Decomposing $\cX(R)$ into the semialgebraic pieces $\spe_{\cX}^{-1}(E_J^o)$, one can show that
 $$[\cX(R)]=\sum_{\emptyset\neq J\subset I}(-1)^{|J|-1}\Theta(E_J^o,d-|J|+1)\cdot \Theta(\Delta_0,|J|-1)$$ where $\Delta_0$ is the $0$-simplex (as an intermediate step, one uses
 the scissor relations in a suitable Grothendieck ring of $G$-rational polyhedra to show that $\Theta(\Delta^{\circ},n)=(-1)^n\Theta(\Delta_0,n)$ for every $n>0$ and every open $n$-dimensional simplex $\Delta^{\circ}$ -- see Example \ref{ex:ss} for a special case of this calculation).
  In particular,
 \begin{equation}\label{eq:ssvol}\Vol(\cX(R))=\sum_{\emptyset \neq J\subset I}[E_J^o](1-\LL)^{|J|-1}.\end{equation}
 This implies that, when $\cX$ is defined over a formal power series ring, the motivic volume  $\Vol(\cX(R))$ coincides with Denef and Loeser's motivic nearby fiber of $\cX$ (see \cite{NPS} for a precise statement).

If $X$ is a smooth and proper $K$-variety, a strictly semistable model of $X$ is a strictly semi-stable proper $R$-scheme $\cX$ endowed with an isomorphism of $K$-schemes $\cX_K\to X$. If $K$ is the field of complex Puiseux series then such a strictly semistable model always exists, since $X$ is defined over a Laurent series field $K_0\subset K$ and we can apply resolution of singularities and the semistable reduction theorem over the (discrete) valuation ring of $K_0$. For general $K$, the existence of semistable models is not known. If $\cX$ is a strictly semistable model of $X$, then formula \eqref{eq:ssvol} becomes
$$\Vol(X(K))=\sum_{\emptyset \neq J\subset I}[E_J^o](1-\LL)^{|J|-1}.$$

\begin{example}\label{ex:ss}
Consider the $R$-scheme $$\cX=\Spec\,R[x,y]/(xy-a)$$ where $a$ is any nonzero element in the maximal ideal of $R$.
 Denote by $E_1$ the zero locus of $x$ and by $E_2$ the zero locus of $y$ in $\cX_k$. Then $E_{\{1,2\}}$ is the origin $O=(0,0)$ of $\cX_k$, and
   the locally closed subsets $E_1^o,\,E_2^o$ and $\{O\}$ form a partition of $\cX_k$. Thus $S_1=\spe_{\cX}^{-1}(E^o_1)$, $S_2=\spe_{\cX}^{-1}(E^o_2)$ and $S_{\{1,2\}}=\spe_{\cX}^{-1}(O)$ form a partition of $\cX(R)$ into semialgebraic subsets, and
   $$[\cX(R)]=[S_1]+[S_2]+[S_{\{1,2\}}]$$ in $K_0(\VF_K)$ by the scissor relations. Since $\cX$ is smooth over $R$ at every point of $E_1^o$ and $E_2^o$, we have
   $[S_1]=\Theta(E_1^o,1)$ and $[S_2]=\Theta(E_2^o,1)$ in $K_0(\VF_K)$.

   In order to describe the class of $S_{\{1,2\}}$ in $K_0(\VF_K)$, we observe that
   projection onto the $x$-coordinate defines a semialgebraic bijection between $S_{\{1,2\}}$ and the set $$\{x\in K^{\ast}\,|\,0<v(x)<v(a)\}=\trop^{-1}(\Gamma)$$
   with $\Gamma$ the open interval $(0,a)$ in $\R$. Thus $[S_{\{1,2\}}]=\Theta(\Gamma,1)$ in $K_0(\VF_K)$. We can further simplify this expression by
   noting that multiplication with $a$ defines a semialgebraic bijection between $\trop^{-1}(\R_{\geq 0})$ and $\trop^{-1}(\R_{\geq v(a)})$, so that
   $$[\trop^{-1}([0,a))]=[\trop^{-1}(\R_{\geq 0})]-[\trop^{-1}(\R_{\geq v(a)})]=0$$ in $K_0(\VF_K)$, and
   $$\Theta(\Gamma,1)=[\trop^{-1}(\Gamma)]=[\trop^{-1}([0,a))] - [\trop^{-1}(0)]=-\Theta(\Delta_0,1).$$ Adding up all the contributions, we conclude that
   $$[\cX(R)]=\Theta(E_1^o,1)+\Theta(E_2^o,1)-\Theta(\Delta_0,1)=\Theta(\Delta_0,1)$$ in $K_0(\VF_K)$, where the last equality follows from the fact that
   $E_1^o$ and $E_2^o$ are isomorphic to $\mathbb{G}_{m,k}$, and $\Theta(\mathbb{G}_{m,k},1)=\Theta(\Delta_0,1)$ by Example \ref{exam:rel}.

 In this particular example, we can perform the same calculation more efficiently by observing that projection onto the $x$-coordinate also defines a semialgebraic bijection between
   $\cX(R)$ and the set $$\{x\in K^{\ast}\,\vert\,0 \leq v(x) \leq v(a)\}=\trop^{-1}(\Gamma')$$ where $\Gamma'$ is the closed interval $[0,a]$ in $\R$.
   Since $\Gamma'$ is the disjoint union of $\Gamma$ and two $0$-simplices, this yields
   $$[\cX(R)]=\Theta(\Gamma',1)=\Theta(\Gamma,1)+2\Theta(\Delta_0,1)=\Theta(\Delta_0,1).$$
      For the motivic volume, we find
   $$\Vol(\cX(R))=[E_1^o]+[E_2^o]-(\LL-1)=(\LL-1)$$ in $K_0(\Var_k)$.
\end{example}

\section{Tropical computation of the motivic volume}
 In order to compute the motivic volume on a large and interesting class of examples, we have established an explicit formula for the motivic volume of sch\"{o}n subvarieties of algebraic tori in terms of their tropicalization. If $K$ is the field of complex Puiseux series, it follows from \cite[6.11]{LQ} that the classes of such varieties generate the Grothendieck group $K_0(\Var_K)$, so that this method can be used, in principle, to compute the motivic volume of any $K$-variety. This method tends to be simpler than finding strictly semistable models as in Section \ref{ss:ss}. A similar formula for Denef and Loeser's motivic nearby fiber was obtained (by means of a more involved argument) in \cite{KS}. Our approach yields more information because we also get an explicit description for the class of a sch{\"o}n variety in the Grothendieck ring of semialgebraic sets $K_0(\VF_K)$.

\subsection{Sch\"on varieties and tropical compactifications}\label{ss:tropical}
 Let $n$ be a positive integer and let $X$ be an integral closed subvariety of the algebraic torus $\mathbb{G}_{m,K}^n$. We assume that $X$ is {\em sch\"{o}n}, which is a standard non-degeneracy condition in tropical geometry that makes it possible to construct explicit compactifications of $X$ over the valuation ring $R$ with good properties, using toric geometry.
  This condition states that, for every element $a$ of $(K^{\ast})^n$, the schematic closure of $a^{-1}X$ in $\mathbb{G}_{m,R}^n$ is smooth over $R$.
  The special fiber of this schematic closure only depends on $w=\trop(a)\in \R^n$ (up to isomorphism of $k$-schemes) and is called the {\em initial degeneration} of $X$ at $w$.
  It is denoted by $\mathrm{in}_w(X)$.

 Let $\mathrm{Trop}(X)$ be the tropicalization of $X$, that is, the closure of the image of $X(K)$ under the tropicalization map
 $\trop:(K^{\ast})^n\to \R^n$. Let $\Sigma$ be a $G$-admissible tropical fan for $X$ in $\R^n\oplus \R_{\geq 0}$, in the sense of \cite[12.1]{gubler}.
  This is a fan whose rays are spanned by vectors in $G^{n}\oplus G_{\geq 0}$ and whose support is equal to the closure of the cone over $\mathrm{Trop}(X)\times \{1\}$ in
  $\R^n\oplus \R_{\geq 0}$. Intersecting the cones in $\Sigma$ with the affine subspace $\R^n\times \{1\}$ of $\R^{n+1}$, we obtain
a $G$-rational polyhedral subdivision of $\mathrm{Trop}(X)$, which we denote by $\Sigma_1$. On the other hand, by intersecting the cones of $\Sigma$ with the coordinate hyperplane $\R^n\times\{0\}$ in $\R^{n+1}$, we obtain a $G$-admissible fan in $\R^n$, which we denote by $\Sigma_0$ and which is called the {\em recession fan} of $\Sigma_1$.
   For every cell $\gamma$ in $\Sigma_1$, we set $\mathrm{in}_\gamma(X)=\mathrm{in}_w(X)$ where $w$ is any point in the relative interior $\mathring{\gamma}$ of $\gamma$ (in our terminology, cells are closed). This definition does not depend on the choice of $w$ (up to isomorphism of $k$-schemes).

 We denote by $\mathbb{P}(\Sigma)$ the toric $R$-scheme associated with $\Sigma$ \cite[\S7]{gubler}.
  This is an equivariant partial compactification of $\mathbb{G}_{m,K}^n$ over $R$, whose generic fiber is
  the toric variety over $K$ defined by the recession fan $\Sigma_0$, and whose special fiber is a union of toric varieties associated with the vertices of $\Sigma_1$.
   We denote by $\cX$ the schematic closure of $X$ in $\mathbb{P}(\Sigma)$.
   Then $\cX$ is proper over $R$ and the multiplication morphism
   $$m:\mathbb{G}^n_{m,R}\times_R \cX\to \mathbb{P}(\Sigma)$$ is faithfully flat, by the definition of a tropical fan. Moreover, the morphism $m$ is also smooth because of our assumption that $X$ is sch{\"o}n. Thus $m$ is smooth and surjective.

There exists a natural bijective correspondence between the set of cells in $\Sigma_1$ and the set of torus orbits in $\mathbb{P}(\Sigma)_k$, which is inclusion reversing on orbit closures. For every cell $\gamma$, we will denote the corresponding torus orbit by $O(\gamma)$, and we write $\cX_k(\gamma)$ for the intersection $O(\gamma)\cap \cX_k$ (with its reduced induced structure). This is a smooth subvariety of $\cX_k$. We set
$$X_\gamma= X(K)\cap \spe_{\cX}^{-1}(\cX_k(\gamma)).$$
This set is also equal to $X(K)\cap \trop^{-1}(\mathring{\gamma})$. As $\gamma$ ranges over the cells in $\Sigma_1$, the sets $X_\gamma$ form a semialgebraic partition of $X(K)$.

\subsection{A tropical formula for the motivic volume}
The following theorem gives an explicit formula for the class of a sch\"on subvariety of a torus in the Grothendieck ring of semialgebraic sets.

\begin{theorem}\label{thm:trop}
For every cell $\gamma$ in $\Sigma_1$, we have
$$[X_\gamma]=\Theta(\cX_k(\gamma),d-\mathrm{dim}(\gamma))\cdot \Theta(\mathring{\gamma},\mathrm{dim}(\gamma))$$ in $K_0(\VF_K)$.
Hence,
$$[X(K)]=\sum_{\gamma\in \Sigma_1}\Theta(\cX_k(\gamma),d-\mathrm{dim}(\gamma))\cdot \Theta(\mathring{\gamma},\mathrm{dim}(\gamma))$$ in $K_0(\VF_K)$.
\end{theorem}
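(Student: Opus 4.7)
The second assertion follows from the first by summing over $\gamma\in\Sigma_1$ and applying the scissor relations in $K_0(\VF_K)$, since the sets $X_\gamma$ form a semialgebraic partition of $X(K)$. We therefore focus on the per-cell identity. Fix $\gamma$ of dimension $r$, and set $s=d-r$. The first step is a translation reducing to the case $0\in\mathring\gamma$: pick a $G$-rational point $w\in\mathring\gamma$ and $a\in(K^\ast)^n$ with $\trop(a)=w$; multiplication by $a^{-1}$ is a semialgebraic automorphism of $\G_{m,K}^n$ replacing $(X,\gamma)$ by $(a^{-1}X,\gamma-w)$. Sch\"{o}nness is preserved, $\mathrm{in}_{\gamma-w}(a^{-1}X)$ and $\mathrm{in}_\gamma(X)$ are isomorphic as $k$-schemes, and $\Theta(\mathring\gamma,r)$ is invariant under $G^n$-translation, so neither side of the desired identity changes.

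With $0\in\mathring\gamma$, the sch\"{o}n hypothesis forces the schematic closure $\cX^\flat$ of $X$ in $\G_{m,R}^n$ to be smooth of relative dimension $d$ over $R$, with $\cX^\flat_k=\mathrm{in}_\gamma(X)$. Let $L_\gamma\subset\R^n$ be the $G$-rational linear span of $\gamma$ and $T_\gamma\subset\G_{m,R}^n$ the associated rank-$r$ subtorus. The constancy of $\mathrm{in}_w(X)$ as $w$ varies over $\mathring\gamma$, a standard consequence of tropical initial-form theory, forces $\mathrm{in}_\gamma(X)$ to be $T_{\gamma,k}$-invariant, with quotient canonically identified with $\cX_k(\gamma)$. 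Fixing a $G$-rational direct complement $L_\gamma'$ of $L_\gamma$ in $\R^n$ yields a splitting $\G_{m,R}^n=T_\gamma\times_R T'$.

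The core step is to produce a semialgebraic bijection
$$X_\gamma \;\simeq\; \trop^{-1}(\mathring\gamma)\,\times\,\spe_\cY^{-1}(\cX_k(\gamma)),$$
where the first factor is taken inside $T_\gamma(K)\cong(K^\ast)^r$ and $\cY$ is a smooth $R$-scheme of relative dimension $s$ containing $\cX_k(\gamma)$ in its special fiber. The splitting identifies $\trop^{-1}(\mathring\gamma)\subset(K^\ast)^n$ with $\trop^{-1}(\mathring\gamma)\times T'(R)^\ast$, since $\mathring\gamma\subset L_\gamma$. Smoothness of $\cX^\flat$ together with the $T_{\gamma,k}$-invariance of its special fiber allows one, \'etale-locally near each point of $\cX_k(\gamma)$, to write $\cX^\flat\cong T_\gamma\times_R \cY$ for a smooth $R$-scheme $\cY$ whose special fiber contains $\cX_k(\gamma)$. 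The henselian property of $R$ then upgrades these local product structures to a semialgebraic bijection, in the spirit of the model-independence argument for $\Theta(X,n)$ given before Theorem \ref{thm:HK}. Taking $\Theta$-classes gives the claimed product.

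The main obstacle is the global coherence of these \'etale-local splittings: although $\cX^\flat_k$ is a $T_{\gamma,k}$-torsor over $\cX_k(\gamma)$ (trivial after shrinking), the total space $\cX^\flat$ need not split globally as a product. One must patch together local product decompositions and verify that the gluing ambiguities, together with any non-triviality of the torsor, cancel modulo scissor relations in $K_0(\VF_K)$. The argument is essentially the one used to prove independence of the model $\cX$ in the definition of $\Theta(X,n)$, and Example \ref{ex:ss} illustrates the simplest instance of this cancellation.
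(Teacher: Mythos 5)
Your target bijection
\[
X_\gamma \;\simeq\; \trop^{-1}(\mathring\gamma)\,\times\,\spe_\cY^{-1}(\cX_k(\gamma))
\]
matches exactly what the paper asserts, and the reduction of the second displayed identity to the first via the scissor relations is correct. The difficulty is in the route you take to the bijection, and there I see a genuine gap rather than just a detail left to the reader.

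After translating so that $0\in\mathring\gamma$, you work with the schematic closure $\cX^\flat$ of $X$ in $\G_{m,R}^n$. But $\cX^\flat(R)=X(K)\cap\trop^{-1}(\{0\})$: it is the single $\trop=0$ slice of $X_\gamma$, not $X_\gamma$ itself (which is the preimage of the whole $r$-dimensional set $\mathring\gamma$). An \'etale-local product structure $\cX^\flat\cong T_\gamma\times_R\cY$, even granted, controls only the tube $\spe_{\cX^\flat}^{-1}(\cX^\flat_k)=\cX^\flat(R)$ and would yield a statement about $[\cX^\flat(R)]$ (with a $\Theta(\Delta_0,r)$ factor), not the desired $[X_\gamma]$ with its $\Theta(\mathring\gamma,r)$ factor. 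Put differently, the model-independence argument for $\Theta$ applies to tubes around special fibers of smooth $R$-schemes; $X_\gamma$ is not such a tube for $\cX^\flat$. To see the whole of $\mathring\gamma$ at once you need a single $R$-scheme whose specialization map has $X_\gamma$ as the preimage of $\cX_k(\gamma)$ — and this is precisely what the tropical compactification $\cX\subset\Pro(\Sigma)$ provides, since $X_\gamma=X(K)\cap\spe_\cX^{-1}(\cX_k(\gamma))$ by construction. The product structure then comes from the toric geometry of $\Pro(\Sigma)$ along the orbit $O(\gamma)$ together with the smoothness of the multiplication map $m:\G_{m,R}^n\times_R\cX\to\Pro(\Sigma)$, not from an ad hoc factorization of a single translate $\cX^\flat$. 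I would also flag that the \'etale-local factorization $\cX^\flat\cong T_\gamma\times_R\cY$ is itself unjustified: $\cX^\flat$ carries no $T_\gamma$-action (only its special fiber does, as your own $\{x+y=1+t\}$-type examples illustrate once one perturbs), so one cannot simply quote $T_{\gamma,k}$-invariance of $\cX^\flat_k$ to deduce a product structure upstairs.

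In short: you have the right statement to prove, and the scissor-relation bookkeeping and translation normalization are fine, but the central bijection is attempted with the wrong model. Replacing $\cX^\flat$ by the tropical compactification $\cX$ in $\Pro(\Sigma)$, and exploiting the local product structure of $\Pro(\Sigma)$ along $O(\gamma)$ together with the smooth surjection $m$, is what makes the henselian gluing go through; the globalization issue you raise at the end is then handled by the same \'etale-covering-and-scissor-relation mechanism as in the well-definedness of $\Theta(X,n)$, but applied to $\cX$, not to $\cX^\flat$.
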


The theorem is proven by constructing a semialgebraic bijection between $X_\gamma$ and a semialgebraic set of the form $\cY(R)\times \trop^{-1}(\mathring{\gamma'})$
 where $\cY$ is a smooth $R$-scheme with special fiber isomorphic to $\cX_k(\gamma)$ and $\gamma'$ is an embedding of the polyhedron $\gamma$ in $\R^{\mathrm{dim}(\gamma)}$.
 See \cite{NPS} for a detailed argument. As a consequence, we obtain the following expression for the motivic volume of $X(K)$.

 \begin{cor}
 We have
 $$\Vol(X(K))=\sum_{\gamma}(-1)^{\mathrm{dim}(\gamma)}[\cX_k(\gamma)](\LL-1)^{\mathrm{dim}(\gamma)}=\sum_{\gamma}(-1)^{\mathrm{dim}(\gamma)}[\mathrm{in}_{\gamma}(X)]$$ in $K_0(\Var_k)$, where $\gamma$ runs over the {\em bounded} cells in $\Sigma$.
 \end{cor}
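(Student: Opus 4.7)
The plan is to apply the ring morphism $\Vol:K_0(\VF_K)\to K_0(\Var_k)$ termwise to the formula of Theorem \ref{thm:trop},
$$[X(K)]=\sum_{\gamma\in\Sigma_1}\Theta(\cX_k(\gamma),d-\mathrm{dim}(\gamma))\cdot\Theta(\mathring{\gamma},\mathrm{dim}(\gamma)),$$
and then simplify each factor. By Corollary \ref{cor:HK}\eqref{it:defvol1}, and since $\cX_k(\gamma)$ is smooth and locally admits an immersion into a smooth $R$-scheme of relative dimension $d-\mathrm{dim}(\gamma)$ (one may work in a neighborhood that carries an \'etale chart to $\A^n_R$, as in the construction of $\Theta$), one obtains $\Vol(\Theta(\cX_k(\gamma),d-\mathrm{dim}(\gamma)))=[\cX_k(\gamma)]$. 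Combining Corollary \ref{cor:HK}\eqref{it:defvol2} with the additive extension by the invariant $\chi'$ introduced in the proof of that corollary, one obtains $\Vol(\Theta(\mathring{\gamma},\mathrm{dim}(\gamma)))=\chi'(\mathring{\gamma})(\LL-1)^{\mathrm{dim}(\gamma)}$.

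The heart of the argument is then to compute $\chi'(\mathring{\gamma})$ for an arbitrary cell $\gamma\in\Sigma_1$. When $\gamma$ is bounded, $\mathring{\gamma}$ is the relative interior of a polytope; the normalization $\chi'(\delta)=1$ on the closed faces $\delta$ of $\gamma$ combined with additivity yields $\chi'(\mathring{\gamma})=(-1)^{\mathrm{dim}(\gamma)}$, in agreement with the compactly supported Euler characteristic of an open ball. When $\gamma$ is unbounded, its recession cone $\mathrm{rec}(\gamma)$ is nontrivial; since the cones of $\Sigma$ are strictly convex, $\mathrm{rec}(\gamma)$ is pointed. Writing $\gamma=P+\mathrm{rec}(\gamma)$ with $P$ a bounded polytope and slicing $\mathrm{rec}(\gamma)$ transversally to an extremal ray, one obtains a product decomposition isolating an open half-line factor, and the multiplicativity of $\chi'$ under direct products (immediate from its limit formula) together with $\chi'(\R_{>0})=0$ forces $\chi'(\mathring{\gamma})=0$. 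This already gives the first equality of the corollary, with the sum restricted to bounded cells.

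For the second equality $[\cX_k(\gamma)](\LL-1)^{\mathrm{dim}(\gamma)}=[\mathrm{in}_\gamma(X)]$ in $K_0(\Var_k)$, one uses the standard tropical fact that for any $a\in(K^{\ast})^n$ with $\trop(a)\in\mathring{\gamma}$, the initial degeneration $\mathrm{in}_\gamma(X)$ is stable under translation by the subtorus $T_\gamma\subset\mathbb{G}_{m,k}^n$ whose cocharacter lattice is spanned by $\gamma-\gamma$, and that the quotient $\mathrm{in}_\gamma(X)/T_\gamma$ is canonically isomorphic to $\cX_k(\gamma)$. The quotient map is a Zariski-locally trivial $T_\gamma$-torsor, and since $T_\gamma\simeq\mathbb{G}_{m,k}^{\mathrm{dim}(\gamma)}$ has class $(\LL-1)^{\mathrm{dim}(\gamma)}$, one gets $[\mathrm{in}_\gamma(X)]=[\cX_k(\gamma)](\LL-1)^{\mathrm{dim}(\gamma)}$.

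The main obstacle is the vanishing $\chi'(\mathring{\gamma})=0$ for unbounded lineality-free cells: it rests on a careful polyhedral decomposition reducing the computation to $\chi'(\R_{>0})=0$, and it uses pointedness of the recession cone in an essential way (which is why it is important that $\Sigma$ is a fan). The torus-bundle identification of $\mathrm{in}_\gamma(X)$ over $\cX_k(\gamma)$ is a secondary point, standard in tropical geometry under the sch\"on hypothesis but to be invoked with care.
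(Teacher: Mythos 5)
Your proof follows the paper's route essentially verbatim: apply $\Vol$ termwise to the decomposition of $[X(K)]$ from Theorem~\ref{thm:trop}, identify the polyhedral factor with $\chi'(\mathring{\gamma})(\LL-1)^{\dim\gamma}$, observe that $\chi'(\mathring{\gamma})$ is $(-1)^{\dim\gamma}$ for bounded $\gamma$ and $0$ for unbounded $\gamma$, and then use the fact that $\mathrm{in}_\gamma(X)\to\cX_k(\gamma)$ is a Zariski-locally trivial $\mathbb{G}_{m,k}^{\dim\gamma}$-torsor (with local triviality ensuring the factorization in $K_0(\Var_k)$).

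The one step I would flag is your justification of $\chi'(\mathring{\gamma})=0$ in the unbounded case. You invoke a ``product decomposition isolating an open half-line factor,'' but the decomposition $\gamma=P+\mathrm{rec}(\gamma)$ is a Minkowski sum, not a Cartesian product, and a general unbounded pointed polyhedron is not a product of a polytope and a half-line (for instance, a two-dimensional cell whose recession cone is a full two-dimensional cone), so multiplicativity of $\chi'$ does not apply directly. The clean arguments are either to compute $\chi'(\mathring{\gamma})=\lim_{r\to\infty}\chi_c(\mathring{\gamma}\cap[-r,r]^n)$ directly, or to run the inductive face count $1=\chi'(\gamma)=\sum_F\chi'(\mathring{F})$ using the (classical) contractibility of the bounded subcomplex of a pointed unbounded polyhedron. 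The paper asserts this vanishing without proof; your conclusion agrees, but the intermediate step as written does not hold.
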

 \begin{proof}
To prove the first equality, it suffices to observe that for every cell $\gamma$ of $\Sigma_1$, the additive invariant
 $\chi'(\mathring{\gamma})$ vanishes if $\gamma$ is unbounded, and equals $(-1)^{\mathrm{dim}(\gamma)}$ if $\gamma$ is bounded. The second equality follows from the fact that
 $\mathrm{in}_{\gamma}X$ is a $\mathbb{G}^{\mathrm{dim}(\gamma)}_{m,k}$-torsor over $\cX_k(\gamma)$.
 \end{proof}
We have proven similar formulas for the schematic closure of $X$ in the generic fiber of $\mathbb{P}(\Sigma)$.

\section{Application: refined Severi degrees}\label{sec:appli}
 Our main motivation for proving Theorem \ref{thm:trop} was to find a geometric interpretation for Block and G\"ottsche's refined tropical multiplicities \cite{BG}, which were introduced
 as the tropical counterparts of the refined Severi degrees of G\"ottsche and Shende \cite{GS}. We will briefly explain the general ideas.

\subsection{The refined Severi degrees of G{\"o}ttsche and Shende}
  Let $F$ be an algebraically closed field of characteristic zero.
   We denote by $\eu:K_0(\Var_F)\to \Z$ the ring morphism that sends the class of each $F$-scheme of finite type $X$ to the $\ell$-adic Euler characteristic of $X$, for any prime $\ell$.
  A {\em curve} over $F$ will mean a connected projective $F$-scheme of pure dimension one. We do not assume it to be reduced or irreducible.
  If $U$ is a Noetherian $F$-scheme, then a {\em family of curves} over $U$ is a flat projective morphism $\cC\to U$ whose geometric fibers are curves.
 We denote by $\Hilb^i_{\cC/U}$ the relative Hilbert scheme of $i$ points of the family $\cC\to U$.

 \begin{definition} Let $U$ be a connected $F$-scheme of finite type and let $\cC\to U$ be a family of curves over $U$.
 The motivic Hilbert zeta function of this family is the generating series
 $$Z_{\cC}(q)=\sum_{i\geq 0}[\Hilb^i_{\cC/U}]q^{i}$$ in $K_0(\Var_F)\llbr q \rrbr$.
 \end{definition}

 When $U=\Spec F$ and $\cC$ is smooth over $F$, then $\Hilb^i_{\cC/U}$ is isomorphic to the $i$-th symmetric power of $\cC$ and $Z_{\cC}(q)$ coincides with Kapranov's motivic zeta function \cite{kapranov}, a motivic upgrade of the
 Hasse-Weil zeta function for varieties over finite fields. Kapranov has proven that it is a rational function in $q$. More precisely, $(1-q)(1-q\LL)Z_{\cC}(q)$ is a polynomial of degree $2g$ in $K_0(\Var_F)[q]$, where $g$ denotes the genus of $\cC$. This result has been generalized to singular curves: see, for instance, Proposition 15 in \cite{GS} for the case of integral Gorenstein curves.

 In order to extract invariants from $Z_{\cC}(q)$, for general families $\cC\to U$, it is convenient to rearrange the terms in the generating series as in \cite[\S2.1]{GS}. If we denote by $g$ the arithmetic genus of the curves in the family, then there exists a unique sequence $N^{\mot}_0(\cC),N^{\mot}_1(\cC),\ldots$ of elements in $K_0(\Var_F)$ such that
 $$q^{1-g}Z_{\cC}(q)=\sum_{i=0}^{\infty}N_i^{\mot}(\cC)\left(\frac{q}{(1-q)(1-q\LL)}\right)^{i+1-g}.$$
  This is simply a formal consequence of the fact that the change of variable $q\mapsto q/(1-q)(1-q\LL)$ defines an automorphism of $K_0(\Var_F)\llbr q \rrbr$.
   Multiplying both sides with $q^{g-1}$ and setting $q=0$ reveals that $N_0^{\mot}(\cC)=[U]$.  For every $i\geq 0$, we set $n_i(\cC)=\eu(N^{\mot}_i(\cC))$. These invariants carry interesting enumerative information, as is illustrated by the following result.

 \begin{theorem}[Pandharipande-Thomas \cite{PT}]\label{thm:PT}
 Let $U$ be a connected $F$-scheme of finite type and let $\cC\to U$ be a family of reduced Gorenstein curves of arithmetic genus $g$ over $U$. Let $\delta$ be an element in $\{0,\ldots,g\}$.
 Assume that the family $\cC$ contains finitely many fibers of geometric genus $g-\delta$ and that these fibers have only nodal singularities (we will call such curves $\delta$-nodal).
 Assume moreover that the geometric genus of all the other fibers in $\cC$ is strictly larger than $g-\delta$. Then $n_i(\cC)$ vanishes for $i>\delta$, and
 $n_{\delta}(\cC)$ equals the number of $\delta$-nodal curves in $\cC$.
\end{theorem}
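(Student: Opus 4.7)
The plan is to reduce the statement to the single-fiber case of Pandharipande--Thomas by exploiting additivity of the motivic Hilbert zeta function with respect to locally closed decompositions of the base. First I would observe that, if $U=Z\sqcup U^\circ$ with $Z$ closed in $U$ and $U^\circ$ its open complement, then base change of the relative Hilbert scheme yields $\Hilb^i_{\cC_Z/Z}$ over $Z$ and $\Hilb^i_{\cC^\circ/U^\circ}$ over $U^\circ$, so that $Z_\cC(q)=Z_{\cC_Z}(q)+Z_{\cC^\circ}(q)$ in $K_0(\Var_F)\llbr q \rrbr$. Since $\cC\to U$ is flat and $U$ is connected, the arithmetic genus $g$ of every geometric fiber is the same, and the normalization by $q^{1-g}$ is compatible across strata. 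The change of variable $q\mapsto q/((1-q)(1-q\LL))$ defines a $K_0(\Var_F)$-linear automorphism of $K_0(\Var_F)\llbr q \rrbr$, hence $N_i^{\mot}(\cC)=N_i^{\mot}(\cC_Z)+N_i^{\mot}(\cC^\circ)$ and, after applying $\eu$, $n_i(\cC)=n_i(\cC_Z)+n_i(\cC^\circ)$. Iterating, additivity of the $n_i$ holds for any finite constructible stratification of $U$.

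Next, let $Z=\{y_1,\ldots,y_N\}\subset U$ be the finite closed subset parameterizing the $\delta$-nodal fibers, and $U^\circ$ its open complement. By additivity one gets $n_i(\cC)=\sum_{j=1}^N n_i(C_{y_j})+n_i(\cC^\circ)$. For each isolated $\delta$-nodal curve $C_{y_j}$, I would invoke the single-fiber case of \cite{PT}, obtained by an explicit analysis of the compactified Jacobian of a reduced Gorenstein curve: whenever $C$ has exactly $\delta(C)$ nodes as its only singularities, $n_i(C)=0$ for $i>\delta(C)$ and $n_{\delta(C)}(C)=1$. Summing, this contributes $0$ to $n_i(\cC)$ for $i>\delta$ and contributes $N$ to $n_\delta(\cC)$, matching the claimed count.

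It remains to show that $n_i(\cC^\circ)=0$ for every $i\geq\delta$. I would further stratify $U^\circ$ by the (upper semicontinuous) $\delta$-invariant of its fibers to obtain a finite locally closed decomposition $U^\circ=\bigsqcup_k U_k$ along which $\cC_k=\cC\times_U U_k$ is a family of reduced Gorenstein curves of constant $\delta$-invariant $\delta_k<\delta$. By additivity, it then suffices to establish the \emph{family} version of the vanishing theorem: $n_i(\cC_k)=0$ for all $i>\delta_k$, which a fortiori gives $n_i(\cC_k)=0$ whenever $i\geq\delta>\delta_k$. This family PT vanishing is the decisive input supplied by \cite{PT}; it is proved using the relative Abel--Jacobi map to the compactified Picard scheme to stratify the relative Hilbert scheme, controlling the support of $N_i^{\mot}$ after the change of variable by the bound on the $\delta$-invariant. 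This family statement is the main obstacle; the single-fiber case of Step~2 is a special instance of it, and everything else is a formal manipulation of the additive structure of $K_0(\Var_F)$. Combining the two contributions yields both assertions at once.
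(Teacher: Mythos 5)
Your strategy of stratifying the base and exploiting additivity of the motivic Hilbert zeta function is sound as far as it goes, but it misses the one principle the proof actually hinges on, and this creates a genuine gap in the final step. Scissor-type additivity — the fact that $N_i^{\mot}(\cC)=N_i^{\mot}(\cC_Z)+N_i^{\mot}(\cC^\circ)$ for a closed/open decomposition — holds at the level of $K_0(\Var_F)$ and hence for \emph{every} ring-homomorphism realization, including $\chi_{-y}$. But the theorem you are trying to prove is \emph{false} for $\chi_{-y}$: the elliptic-pencil example later in the paper exhibits a family where every $1$-nodal fiber $C$ has $N_1(C)=y$, yet $N_1(\cC)=y^2+10y+1\neq 12y$. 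An argument that uses only additivity cannot detect the difference between $\eu$ and $\chi_{-y}$, so additivity alone cannot close the proof. What is specific to $\eu$, and what the paper's proof uses explicitly, is the \emph{integral calculus of Euler characteristics}: for a morphism $f\colon Y\to X$ of $F$-schemes of finite type one has $\eu(Y)=\sum_{n}\eu(X_n)\cdot n$, where $X_n$ is the constructible locus of $x\in X$ with $\eu(f^{-1}(x))=n$. Applied to each relative Hilbert scheme $\Hilb^i_{\cC/U}\to U$, this reduces the computation of $n_i(\cC)$ to the \emph{single-fiber} values $n_i(C_u)$ integrated against $\eu$ on the base. This is strictly stronger than the identity $\eu(Y)=\sum_n\eu(f^{-1}(X_n))$, which is the only thing additivity gives you.

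Your proposal defers the hard part to a ``family version of PT vanishing'': $n_i(\cC_k)=0$ for $i>\delta_k$ on a stratum of constant $\delta$-invariant. The result quoted from Appendix~B.1 of \cite{PT} is a statement about a single reduced Gorenstein curve $C$: if $\delta(C)<\delta$ then $n_i(C)=0$ for $i\geq\delta$, and if $C$ is $\delta$-nodal then $n_i(C)$ vanishes for $i>\delta$ and $n_\delta(C)=1$. The passage from this single-curve statement to the family statement you invoke is exactly where the integral calculus of Euler characteristics is needed, and is therefore not a formal consequence of additivity. So as written, your proof does not establish the result — it reduces it to an unproven lemma that is in fact equivalent in difficulty to the theorem itself. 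To repair it, replace the stratification-plus-family-vanishing step with the fiber-integration identity for $\eu$, apply it to each $\Hilb^i_{\cC/U}\to U$ (noting that the invariants $n_i$ are obtained from the $\eu(\Hilb^i)$ by a universal formal change of variable in the formal power series ring $\Z\llbr q\rrbr$, which commutes with integration over the base), and then quote the single-fiber PT result exactly as the paper does.
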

\begin{proof}
The proof relies in a crucial way on the integral calculus of Euler characteristics: let $f:Y\to X$ be a morphism of $F$-schemes of finite type.
For every integer $n$ we denote by $X_n$ the set of points $x\in X$ such that the fiber of $f$ over $x$ has Euler characteristic $n$.
 Then $X_n$ is empty for all but finitely many $n$, the sets $X_n$ form a partition of $X$ into constructible subsets, and $$\eu(Y)=\sum_{n\in \Z}\eu(X_n)\cdot n.$$
 We express this property by saying that we can compute the Euler characteristic of a family by integrating the Euler characteristics of the fibers over the base.
  It follows that we can also compute the invariants $n_i(\cC)$ by integration over the base, so that we may assume that the family $\cC$ consists of a single curve $C$ over $F$.
    Now the theorem is a direct consequence of the following result from Appendix B.1 in \cite{PT}: if the geometric genus of $C$ is strictly larger than $g-\delta$ then $n_i(C)$ vanishes for $i\geq \delta$. If $C$ is $\delta$-nodal then $n_i(C)$ vanishes for $i>\delta$ and $n_{\delta}(C)=1$.
    (Beware that our indexation of the invariants $n_i(\cC)$ is different from the one in \cite{PT}; we follow the convention in \cite{GS}.)
   \end{proof}
 The conditions in the theorem are satisfied in many interesting cases, for instance, for the universal curve of a general $\delta$-dimensional subspace of the linear system attached to a $\delta$-very ample line bundle on a smooth proper surface \cite[2.1]{KST}.

  Now it is natural to ask what kind of finer geometric information is contained in the motivic invariants  $N_i^{\mot}(\cC)$. Motivated by ideas from string theory,  G{\"o}ttsche and Shende have proposed in \cite{GS} to replace the Euler characteristic by the $\chi_{-y}$-genus. Recall that the $\chi_{-y}$-genus of a smooth and proper $F$-scheme $X$  is defined by
  $$\chi_{-y}(X) = \sum_{q} (-1)^{q}\chi(X,\Omega^q_{X/F})y^q.$$ It extends uniquely to a ring morphism
  $$\chi_{-y}:K_0(\Var_F)\to \Z[y]$$ so that we can define the $\chi_{-y}$-genus of an arbitrary $F$-variety by additivity.
    For instance, $$\chi_{-y}(\A^1_F)=\chi_{-y}(\mathbb{P}^1_F)-\chi_{-y}(\Spec F)=(y+1)-1=y.$$
  The $\chi_{-y}$-genus specializes to the Euler characteristic by setting $y=1$.
   Hence, the $\chi_{-y}$-genus of $N_i^{\mot}(\cC)$ can be viewed as a refinement of $n_i(\cC)$. We will denote it by $N_i(\cC)$; then $n_i(\cC)=N_i(\cC)\vert_{y=1}$.

\begin{remark}
One could further refine these invariants by upgrading the $\chi_{-y}$-genus by the Hodge-Deligne polynomial, or by working directly with the classes
$N_i^{\mot}(\cC)$. The problem with the invariants $N_i^{\mot}(\cC)$ is that, in the set-up of Section \ref{ss:BGmult} below, they might depend too strongly
 on the choice of the point configuration $S$. Their Hodge-Deligne realization will be independent of $S$ if this set is sufficiently general, but
 an important advantage of the $\chi_{-y}$-genus is that it has interesting vanishing properties: it annihilates every abelian variety of positive dimension.
  The effect on the invariants $N_i(\cC)$ is that they focus on the singularities in the family $\cC$. For instance, if $C$ is a smooth projective $F$-curve, then $N_0(C)=1$ and
  $N_i(C)=0$ for $i>0$. More generally, if $C$ is a proper integral Gorenstein curve over $F$, then $N_i(C)$ vanishes when $i$ is strictly larger than the cogenus of $C$ (the difference between the geometric and the arithmetic genus), by Corollary 23 in \cite{GS}. G\"ottsche and Shende conjecture that a similar vanishing result holds for suitable families of curves: see Conjecture 45 in \cite{GS}.
\end{remark}

  \subsection{The refined tropical multiplicities of Block and G{\"o}ttsche}\label{ss:BGmult}
Let $K$ be the field of complex Puiseux series. Let $\Delta$ be a lattice polygon in $\R^2$ with $n+1$ lattice points and $g$ interior lattice points. We denote by
$(X(\Delta),L(\Delta))$ the associated polarized toric surface over $K$.
 The complete linear series $|L(\Delta)|$ has dimension $n$, and its general member is a smooth projective curve of genus $g$.
  We fix an element $\delta$ in $\{0,\ldots,g\}$. Let
  $S$ be a set of $n-\delta$ closed points on the dense torus in $X(\Delta)$, and let $|L| \subset |L(\Delta)|$ be the linear series of curves passing through these points. We assume that
  the points in the tropicalization $\trop(S)\subset \R^2$  lie in tropical general position. We denote by $\cC\to |L|$ the universal curve over $|L|\cong \mathbb{P}_K^{g}$.

 Let $\Gamma$ be a tropical curve of genus $g-\delta$ and degree $\Delta$ through the points of $\trop(S)$. The {\em Mikhalkin multiplicity}
 $n(\Gamma)$ is a purely combinatorial invariant associated with $\Gamma$ \cite{mik}. The classical correspondence theorems in tropical geometry imply that $n(\Gamma)$ equals the number of integral $\delta$-nodal curves $C$ in $\cC$
  such that the intersection of $C$ with the dense torus in $X(\Delta)$ has tropicalization $\Gamma$ (henceforth, we will simply say that $C$ tropicalizes to $\Gamma$).
 In particular,
 if the family $\cC\to |L|$ satisfies the conditions in Theorem \ref{thm:PT} and all the $\delta$-nodal curves in the family are integral, then we can find the number $n_\delta(\cC)$ of $\delta$-nodal curves by solving a purely combinatorial problem, namely, counting the tropical curves $\Gamma$ with multiplicities $n(\Gamma)$.

  In \cite{BG}, Block and G{\"o}ttsche have defined refinements of the tropical multiplicities $n(\Gamma)$ to Laurent polynomials $N(\Gamma)$ in $\Z[y,y^{-1}]$ that
  specialize to $n(\Gamma)$ by setting $y=1$. It is expected that these refined multiplicities form the tropical counterpart of the refined invariants $N_\delta(\cC)$ (up to a renormalizing power of $y$); Block and G{\"o}ttsche have proven this for certain lattice polygons $\Delta$.
   The problem we address in \cite{NPS} is finding a geometric interpretation of the polynomial $N(\Gamma)$ for a fixed tropical curve $\Gamma$.
   As we have recalled above, the value $n(\Gamma)=N(\Gamma)|_{y=1}$ is equal to the number of $\delta$-nodal curves in $\cC$ that tropicalize to $\Gamma$.
      However, $N(\Gamma)$ is {\em not} simply the sum of the invariants $N_\delta(C)$ over the $\delta$-nodal curves $C$ in $\cC$ that tropicalize to $\Gamma$.
   The crucial complication is that we cannot compute the $\chi_{-y}$-genus of a family by integrating over the base, in general, except when the family is locally trivial in the Zariski topology. In fact, the finest invariant that is defined on the Grothendieck ring $K_0(\Var_k)$ and that can be computed on families by integrating over the base, is the Euler characteristic: such an invariant annihilates $\LL-1$ because $\mathbb{G}_{m,k}$ has an \'etale self-cover of degree $2$, and then induction on the dimension and Noether normalization easily imply that it must factor through the Euler characteristic.
        Let us look at a basic example to illustrate this problem.

   \begin{example}
   Consider the linear system of cubics through $8$ general points in $\mathbb{P}^2_K$. The universal family of this linear system is an elliptic pencil
   $\cC\to \mathbb{P}^1_K$ whose fibers are integral and have at worst nodal singularities. One can check that $N_1^{\mot}(\cC)=[\cC]$ (more generally, for a family of integral Gorenstein curves of arithmetic genus $g$ that admits a section, the invariant $N_g^{\mot}$ is the class of the relative compactified Jacobian, by the same reasoning as in Remark 18 of \cite{GS}).
      The total space $\cC$ is the blow-up of $\mathbb{P}^2_K$ at the $9$ base points of the linear system. Thus the Euler characteristic of $\cC$ is $12$, which implies that $\cC$ contains $12$ rational fibers, each of which has one node. The $\chi_{-y}$-genus of every smooth fiber equals $0$. Each rational fiber $C$ is isomorphic to $\mathbb{P}^1_K$ with two points identified, so that $$N_1(C)=\chi_{-y}(C)=\chi_{-y}(\mathbb{P}^1_K)-\chi_{-y}(\Spec K)=y.$$
       However, $$N_1(\cC)=\chi_{-y}(\cC)=\chi_{-y}(\mathbb{P}^2_K)+9\chi_{-y}(\A^1_K)=y^2+10y+1$$
    which is different from $12\chi_{-y}(C)=12y$. The reason is that, even though every smooth fiber in $\cC$ has $\chi_{-y}$-genus $0$,  the union of all the smooth fibers of $\cC\to \mathbb{P}^1_K$ has $\chi_{-y}$-genus $y^2-2y+1$.
   \end{example}

 The solution we propose is simple: instead of looking only at the $\delta$-nodal curves,
  we need to take {\em all} the curves in $\cC$ that tropicalize to $\Gamma$ into account. These form a semialgebraic set, and we can define invariants
 $N_i^{\mot}$, $N_i$ and $n_i$ as before by applying the motivic volume $\Vol$ to this semialgebraic set.
  Let us formulate this in a more precise way.  Let $\Gamma$ be a tropical curve of genus $g-\delta$ and degree $\Delta$ through the points of $\trop(S)$.  Let $|L|_{\Gamma}$ be the set of $K$-points in $|L|$ parameterizing curves in $\cC$ that tropicalize to $\Gamma$, and write $\cC_\Gamma$ for the preimage of $|L|_\Gamma$ in $\cC(K)$.
  For every $i\geq 0$ we denote by $\Hilb^i_{\cC_\Gamma}$ the preimage of $|L|_{\Gamma}$ in $\Hilb^i_{\cC/|L|}(K)$.
  We have seen in Example \ref{exam:semialg}\eqref{it:examtrop} that $|L|_\Gamma$, and thus $\Hilb^i_{\cC_\Gamma}$, are semialgebraic sets.
 We can define invariants $N_i^{\mot}(\cC_\Gamma)$ in $K_0(\Var_{\C})$ in the same way as before by means of the equality
$$\sum_{i\geq 0}\Vol(\Hilb^i_{\cC_\Gamma})q^{i+1-g}=\sum_{i=0}^{\infty}N_i^{\mot}(\cC_\Gamma)\left(\frac{q}{(1-q)(1-q\LL)}\right)^{i+1-g}$$ in $K_0(\Var_\C)\llbr q \rrbr$.
 Specializing with respect to the $\chi_{-y}$-genus, we again obtain polynomials $N_i(\cC_\Gamma)$ in $\Z[y]$.

   \begin{conj}
 Block and G{\"o}ttsche's refined tropical multiplicity $N(\Gamma)$ can be expressed as $N(\Gamma)=y^{-\delta}N_\delta(\cC_\Gamma)$.
      \end{conj}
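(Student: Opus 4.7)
The plan is to apply the tropical formula (Theorem \ref{thm:trop}) to the semialgebraic sets $\Hilb^i_{\cC_\Gamma}$, to assemble the resulting expressions into the generating series that defines the $N_i^{\mot}(\cC_\Gamma)$, and then to reorganize this series as a product of local contributions indexed by the vertices of $\Gamma$. Since every curve parametrized by $|L|_\Gamma$ tropicalizes to $\Gamma$, the semialgebraic set $\cC_\Gamma$ carries a natural stratification whose strata correspond to the cells of a polyhedral decomposition of $\R^2$ adapted to $\Gamma$. Choosing a $G$-admissible tropical fan refining this decomposition for the universal curve and for the relative Hilbert schemes, Theorem \ref{thm:trop} expresses $[\Hilb^i_{\cC_\Gamma}]$ in $K_0(\VF_K)$ as a finite sum of products $\Theta(Y_\gamma, -)\cdot \Theta(\mathring{\gamma}, -)$ with $Y_\gamma$ the initial degeneration of the relative Hilbert scheme at the cell $\gamma$.

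The next step is to analyse these initial degenerations. Away from the vertices of $\Gamma$, the curves in $\cC_\Gamma$ are smooth and the initial degeneration along an edge of weight $m$ is a disjoint union of $m$ one-dimensional tori; the contribution of these strata to the motivic Hilbert zeta function is absorbed into the change of variable $q\mapsto q/((1-q)(1-q\LL))$, exactly as in Kapranov's formula for smooth curves. The essential information therefore concentrates at the vertices: at each trivalent vertex $v$ with dual lattice triangle $\Delta_v$, the local initial degeneration is an integral curve in the toric surface $X(\Delta_v)$, and the global Hilbert scheme restricts locally to a Hilbert scheme on this toric model. After applying the motivic volume and specializing to the $\chi_{-y}$-realization, the generating series $\sum_i \Vol(\Hilb^i_{\cC_\Gamma})\, q^{i+1-g}$ should factor as a product $\prod_v Z_v(q)$ over the vertices of $\Gamma$, mirroring Block and G\"ottsche's formula $N(\Gamma) = \prod_v N(v)$. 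The global prefactor $y^{-\delta}$ then results from the additivity of the local cogenus drops over the vertices together with a uniform normalization of each $Z_v$.

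The main obstacle is the local identification at each vertex: one must show that the $\chi_{-y}$-realization of the local factor $Z_v$, evaluated at the appropriate coefficient of $q$, coincides up to the normalizing power of $y$ with Block and G\"ottsche's quantum-integer expression $N(v) = \bigl((y^{m(v)/2} - y^{-m(v)/2})/(y^{1/2} - y^{-1/2})\bigr)^2$. This reduces to an explicit computation of the motivic Hilbert zeta function of a universal family of curves on the toric surface $X(\Delta_v)$, restricted to those whose tropical germ at $v$ is prescribed. For $m(v)=1$ the local curves are smooth and the computation is transparent; for $m(v)>1$ the curves acquire higher plane singularities and the analysis of their motivic Hilbert schemes is delicate, which is precisely why the statement is currently formulated as a conjecture rather than a theorem.
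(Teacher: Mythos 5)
The statement you are addressing is formulated in the paper as a \emph{conjecture}, not a theorem, and the paper contains no proof of it. The authors state only two partial results: (i) the conjecture holds after setting $y=1$, proven via Berkovich's $\ell$-adic cohomology for $K$-analytic spaces, which lets one compute the Euler characteristic of a semialgebraic family by integrating over the base; and (ii) the conjecture holds in the case $g=1$, verified by a direct computation using Theorem \ref{thm:trop}. You acknowledge at the end of your write-up that the statement remains a conjecture, so your text is really a strategy sketch rather than a proof, and it should be evaluated as such.

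With that caveat, a few concrete issues. First, Theorem \ref{thm:trop} as stated applies to sch\"on subvarieties of algebraic tori $\mathbb{G}_{m,K}^n$. The relative Hilbert schemes $\Hilb^i_{\cC/|L|}$ are projective over $|L|\cong\mathbb{P}_K^g$ and are typically singular; they are not subvarieties of a torus and need not be sch\"on, so the tropical formula cannot be invoked directly on the semialgebraic sets $\Hilb^i_{\cC_\Gamma}$ without substantial additional work (e.g.\ a compatible stratification into sch\"on locally closed pieces and a separate argument that the contributions glue). Second, the claim that the edge contributions are ``absorbed into the change of variable $q\mapsto q/((1-q)(1-q\LL))$'' and that the series therefore factors as $\prod_v Z_v(q)$ is precisely the crux of the conjecture; asserting it as the consequence of a stratification is circular unless you can actually produce the local factorization, which is what is not known. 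Third, your displayed local multiplicity is incorrect: Block and G\"ottsche's vertex multiplicity is the quantum integer $[m(v)]_y=\bigl(y^{m(v)/2}-y^{-m(v)/2}\bigr)/\bigl(y^{1/2}-y^{-1/2}\bigr)$, not its square; the square would double-count and would not specialize to $m(v)$ at $y=1$. Finally, your route does not recover the part of the conjecture that the paper actually proves: the $y=1$ statement is obtained by a genuinely different argument (Euler-characteristic integration via Berkovich cohomology), not by the tropical decomposition you propose, so even if your sketch could be completed it would be proving something new rather than reproducing what is in the paper.
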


 We have proven that this conjecture is correct after setting $y=1$, that is, the Mikhalkin multiplicity $n(\Gamma)$ is the Euler characteristic of $N^{\mot}_\delta(\cC_\Gamma)$. The proof makes use of Berkovich's $\ell$-adic cohomology for $K$-analytic spaces in order to show that the Euler characteristic of a semialgebraic family
 can still be computed by integrating over the base. We have also verified the conjecture in the case $g=1$, using our formula for the motivic volume of a sch{\"o}n variety (Theorem \ref{thm:trop}). We refer to \cite{NPS} for detailed arguments and additional background.

\subsection*{Acknowledgements} The results presented here are part of an ongoing project with Sam Payne and Franziska Schroeter \cite{NPS} and it is a pleasure to thank both of them for the friendly and interesting collaboration. I am grateful to the organizers of the AMS 2015 Summer Research Institute on Algebraic Geometry in Salt Lake City for the invitation to give a talk and to write this contribution for the proceedings. Finally, I would like to thank the referee for her or his careful reading of the text, and for making various suggestions to improve the presentation.

\bibliographystyle{amsplain}

\end{document}